\DeclareMathOperator{\vol}{vol}
\DeclareMathOperator{\BigC}{Big}
\DeclareMathOperator{\End}{End}
\newtheorem{theorem}{Theorem}
\newtheorem{lemma}[theorem]{Lemma}
\newtheorem{prop}[theorem]{Proposition}
\newtheorem{cor}[theorem]{Corollary}
\newtheorem{question}[theorem]{Question}
\newtheorem{defin}[theorem]{Definition}
\theoremstyle{remark}
\newtheorem{rem}[theorem]{Remark}
\newcommand{\BC}{{\mathbb{C}}}
\newcommand{\BN}{{\mathbb{N}}}
\newcommand{\BP}{{\mathbb{P}}}
\newcommand{\BQ}{{\mathbb{Q}}}
\newcommand{\BR}{{\mathbb{R}}}
\newcommand{\BV}{{\mathbb{V}}}
\newcommand{\BZ}{{\mathbb{Z}}}
\newcommand{\mcO}{{\mathcal{O}}}
\newcommand{\mcE}{{\mathcal{E}}}
\newcommand{\md}{{\mathrm{d}}}
\begin{document}

\setlength{\parindent}{2ex}

\title[Algebraic Volumes of Divisors]{Algebraic Volumes of Divisors}

\author[C. Borntr\"ager, M. Nickel]{Carsten Borntr\"ager, Matthias Nickel}
\address{Philipps-Universit\"at Marburg, Institut f\"ur Mathematik, Hans-Meerwein-Straße, D-35032 Marburg, Germany}
\email{borntraegerc@mathematik.uni-marburg.de}
\address{Goethe-Universit\"at, Institut f\"ur Mathematik, Robert-Mayer-Str. 6--8, D-60325 Frankfurt am Main, Germany}
\email{nickel@math.uni-frankfurt.de}

\date{\today}

\begin{abstract}
The volume of a Cartier divisor on a projective variety is a nonnegative real number that measures the asymptotic growth of sections of multiples of the divisor. It is known that the set of these numbers is countable and has the structure of a multiplicative semigroup. At the same time it still remains unknown which nonnegative real algebraic numbers arise as volumes of Cartier divisors on some variety.

Here we extend a construction first used by Cutkosky, and use the theory of real multiplication on abelian varieties to obtain a large class of examples of algebraic volumes. We also show that $\pi$ arises as a volume.
\end{abstract}

\maketitle

\section{Introduction}
\noindent The main purpose of this paper is to realize certain irrational algebraic numbers as volumes of Cartier divisors. 
The volume of a Cartier divisor $D$ on a projective complex variety $X$ measures the asymptotic rate of growth of global sections of its multiples.
If $\dim(X)=d$, then  \[ \vol_X(D)= \limsup_{n \rightarrow \infty} \frac{h^0(\mcO_X(n D))}{n^d/d!} \ . \]
For nef divisors the asymptotic Riemann--Roch theorem yields $\vol_X(D)=(D^d)$.

The volume was first used implicitly by Cutkosky \cite{cutkosky86} to study the existence of Zariski decomposition on higher dimensional varieties and has evolved into a fundamental invariant of line bundles in projective geometry. In \cite{cutkosky86} Cutkosky shows that there is an effective divisor on a certain threefold which has irrational volume, hence can not have a Zariski decomposition even after a birational modification.

The volume function itself enjoys many interesting formal properties: it depends only
on the numerical equivalence class of the divisor and can be extended to a continuous function on the real N\'{e}ron--Severi space $N_\BR^1(X)$.

We will be primarily interested in the set of volumes \[\BV:=\{ a \in \BR_+ \mid a = \vol_X(D) \text{ for some pair } (X,D) \text{ with $D$ an integral Cartier divisor on $X$} \} ,\]
which is known to contain $\BQ_+$ and is a multiplicative semigroup by the Künneth formula \cite[Remark 2.42]{Hab}. \\

Even though the volume function is locally piecewise polynomial with rational coefficients on surfaces \cite{BKS} and in the case of finite generation \cite{KKL}, it can can be irrational or even transcendent in the absence of finite generation phenomena \cite[3.3]{BKS}, \cite[3]{Volumefunctions}.
Furthermore in \cite{Volumefunctions} the authors verify that there are only countably many volume functions for all irreducible projective varieties, in particular, $\BV$ is countable. They also show the existence of a fourfold where the volume function is given by a transcendental formula on an open subset of the big cone $\BigC(X)_{\BR}$, which illustrates that the behaviour of $\BV$ is rather mysterious.

Any volume arises as the Lebesgue volume of a convex set, called the Newton--Okounkov body associated to the divisor which was introduced in \cite{KK} and in \cite{LMconvex}. This leads the authors of \cite{Volumefunctions} to ask if the volume of an integral Cartier divisor is always a period in the sense of \cite{KontZagier} which for example would be true if for each Cartier divisor there existed a Newton-Okounkov body whose boundary is given by the zero sets of finitely many polynomials with rational coefficients. 

Here we try to understand the connection between $\BV$ and $\overline{\BQ}$, and will show that $\BV$ contains a large class of positive real algebraic numbers.
		
\begin{theorem}\label{thm:haupt}
For every totally real Galois number field $K$ there exists a smooth projective variety $X$ and
a divisor $D$ on $X$ such that $\vol_X(D)$ is a primitive element of $K$. 
\end{theorem}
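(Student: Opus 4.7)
The plan is to adapt Cutkosky's projective-bundle construction over a base carrying real multiplication by $K$, so that the resulting volume is computed by an integral whose bounds break Galois symmetry. First, fix an abelian variety $Y$ of dimension $n=[K:\BQ]$ with a real multiplication structure $\mcO_K\hookrightarrow\End(Y)$; such $Y$ exist classically as points of the Hilbert modular variety for $K$. The $\mcO_K$-action diagonalises the holomorphic tangent space of $Y$ into $n$ eigenlines labelled by the real embeddings $\sigma_1,\dots,\sigma_n:K\to\BR$ and embeds $\mcO_K$ as a rank-$n$ sublattice $\Lambda\subset N^1(Y)$ on which the intersection form factors over the $\sigma_i$. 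Concretely, each totally positive $\alpha\in\mcO_K$ produces an ample line bundle $L_\alpha$ with $(L_\alpha^n)=n!\,N_{K/\BQ}(\alpha)$, and along a segment between two such line bundles
\[
\vol_Y\bigl((1-s)L_\alpha+sL_\beta\bigr)=n!\prod_{i=1}^{n}\bigl((1-s)\sigma_i(\alpha)+s\sigma_i(\beta)\bigr)
\]
on the totally positive chamber, vanishing elsewhere (the big cone of an abelian variety coincides with its ample cone, so indefinite classes carry volume zero).

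Next, pick a totally positive $\alpha\in\mcO_K$ and a totally negative $\beta\in\mcO_K$ such that $\gamma:=-\beta/(\alpha-\beta)\in K$ is a primitive element of $K$, and set $A=L_\alpha$, $B=L_\beta$. Form the $(n+1)$-dimensional projective bundle $X=\BP_Y(\mcO_Y(A)\oplus\mcO_Y(B))$ with tautological divisor $\xi$. The pushforward identity $h^0(X,\mcO_X(k\xi))=\sum_{i+j=k}h^0(Y,\mcO_Y(iA+jB))$, combined with asymptotic Riemann--Roch on $Y$, gives
\[
\vol_X(\xi)=(n+1)!\int_{s_*}^{1}\prod_{i=1}^{n}\bigl(s\,\sigma_i(\alpha)+(1-s)\sigma_i(\beta)\bigr)\,\md s ,
\]
where $s_*=\max_i\sigma_i(\gamma)$ is the smallest $s\in[0,1]$ above which all $n$ linear factors are simultaneously positive. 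Because $K/\BQ$ is Galois each $\sigma_i(\gamma)$ lies in $K$, and because $\gamma$ is primitive the $\sigma_i(\gamma)$ are pairwise distinct, so $s_*=\sigma_{i_0}(\gamma)$ for a unique embedding $i_0$ and itself lies in $K$.

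To evaluate, note that the antiderivative $F(s)=\int\prod_i(s\sigma_i(\alpha)+(1-s)\sigma_i(\beta))\,\md s$ is a polynomial of degree $n+1$ whose coefficients are $\mathrm{Gal}(K/\BQ)$-symmetric in the $\sigma_i(\alpha),\sigma_i(\beta)$ and hence rational. Therefore $\vol_X(\xi)=(n+1)!\bigl(F(1)-F(s_*)\bigr)$, with $F(1)\in\BQ$ and $F(s_*)\in\BQ[s_*]=\BQ[\sigma_{i_0}(\gamma)]=K$. Since $F$ has rational coefficients, $F(s_*)$ generates the same subfield of $K$ over $\BQ$ as $s_*$; by the primitivity of $\gamma$ this subfield is $K$ itself, and $\vol_X(\xi)$ is therefore a primitive element of $K$.

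The main obstacle is the asymmetry step: justifying that the integration bound $s_*$ really is realised by a single embedding $\sigma_{i_0}$ and so lives in $K$ rather than being a symmetric function of the conjugates of $\gamma$. This relies on the volume formula for $\Lambda$-classes being valid \emph{only} on the totally positive chamber and dropping to zero outside, so that the integrand genuinely stops at the first embedding where a linear factor hits zero; a secondary but unavoidable technicality is checking that the Cutkosky pushforward identity applies in this asymptotic regime despite $B$ being anti-ample. The totally-real hypothesis on $K$ is used so that the $\sigma_i$ are real embeddings and the integration along $[s_*,1]\subset\BR$ makes sense, while the Galois hypothesis is what keeps $s_*$ and hence $\vol_X(\xi)$ inside $K$ rather than in the Galois closure of $\BQ(\gamma)$.
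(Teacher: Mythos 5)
Your approach is structurally the same as the paper's: both run Cutkosky's projective-bundle construction over a $d$-dimensional abelian variety with real multiplication by $K$, both reduce $\vol_X(\xi)$ via Lemma \ref{int} to an integral of a degree-$d$ polynomial whose roots are the conjugates $\sigma_i(\gamma)$ of a primitive element, and both use the Galois hypothesis so that the relevant endpoint of integration is a single conjugate lying in $K$. Up to that point the argument is sound (modulo the minor $s\leftrightarrow 1-s$ slip and the integrality bookkeeping for $L_\alpha,L_\beta$, which you correctly handle by taking $\alpha,\beta\in\mcO_K$).

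The gap is the final sentence: ``Since $F$ has rational coefficients, $F(s_*)$ generates the same subfield of $K$ over $\BQ$ as $s_*$.'' This implication is false. A rational polynomial applied to a primitive element need not be primitive: if $F$ were a $\BQ$-translate of the minimal polynomial of $s_*$, then $F(s_*)$ would be rational, and more generally $F(s_*)$ can land in any intermediate field. Nothing about $F$ being an antiderivative of a polynomial whose roots are the conjugates of $\gamma$ automatically prevents this degeneration. This is precisely the difficulty the paper devotes the second half of its argument to: Lemma \ref{generic} packages ``$M_\alpha(\alpha)$ is primitive'' as the nonvanishing of an explicit polynomial $G$ in the coefficients of the minimal polynomial; Proposition \ref{nonzero} uses weak approximation and the fundamental theorem of symmetric polynomials to show such a $G$ cannot vanish identically on the primitive elements of $K$; and the closing lemma combines these to conclude that a Zariski-generic primitive $\alpha$ makes the integral primitive. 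Without an analogue of this genericity argument, your proof only produces an element of $K$, not a primitive one, so the statement is not yet established. To repair it, you would need to show that $\alpha,\beta$ (equivalently $\gamma$) can be chosen so that $F(1)-F(s_*)$ is primitive, which amounts to reproving the paper's Lemmas 3.1--3.3 in your parametrization.
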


The idea of the proof is to use Cutkosky's construction in the case of projective bundles over abelian varieties with real multiplication.  

Going in the other direction, we establish the following result.

\begin{prop}
$\pi$ is an element of $\BV$.
\end{prop}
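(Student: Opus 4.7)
The plan is to mimic the Cutkosky projective bundle construction used for Theorem~\ref{thm:haupt} but over an abelian surface $Y$ with Picard number exactly three---for instance $Y = E \times E$ with $E$ an elliptic curve without complex multiplication. The key feature of such a surface is that the intersection form on $N^1(Y)_{\BR}$ has Lorentzian signature $(1,2)$ and no irreducible curve has negative self-intersection (by adjunction $C^2 = 2g_a(C) - 2 \geq -2$, and $C^2 = -2$ is ruled out because there are no rational curves on an abelian variety). Consequently the nef, positive, and pseudoeffective cones on $Y$ all coincide with $\{D^2 \geq 0,\, D \cdot H \geq 0\}$, and $\vol_Y(D) = \max(D^2, 0)$ on the pseudoeffective cone. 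Affine slices of this round Lorentzian cone are genuine elliptical disks, which will be the geometric source of the $\pi$.

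Concretely, I would choose three integral classes $L_0, L_1, L_2 \in N^1(Y)$ with $L_0$ ample and with differences $M_i := L_i - L_0$ mutually orthogonal, orthogonal to $L_0$, and of negative self-intersection. In the basis $F_1 = \{\ast\} \times E,\ F_2 = E \times \{\ast\},\ \Delta \subset E \times E$ (with $F_i^2 = \Delta^2 = 0$ and $F_1 \cdot F_2 = F_1 \cdot \Delta = F_2 \cdot \Delta = 1$) one explicit choice is
\[ L_0 = F_1 + F_2,\qquad M_1 = -2F_1 + \Delta,\qquad M_2 = 2F_2 - \Delta, \]
giving $L_0^2 = 2$ and $M_1^2 = M_2^2 = -4$. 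Setting $X := \BP(L_0 \oplus L_1 \oplus L_2)$ (a smooth projective fourfold) and $\xi := c_1(\mcO_X(1))$, the identity $h^0(X, m\xi) = \sum_{|\alpha|=m} h^0(Y, \alpha_0 L_0 + \alpha_1 L_1 + \alpha_2 L_2)$ together with asymptotic Riemann--Roch for big divisors on $Y$ yields the Cutkosky-type formula
\[ \vol_X(\xi) = 12 \int_T \vol_Y\bigl(t_0 L_0 + t_1 L_1 + t_2 L_2\bigr)\, dt_1\, dt_2, \]
where $T = \{(t_1,t_2) : t_i \geq 0,\ t_1 + t_2 \leq 1\}$ and $t_0 := 1 - t_1 - t_2$.

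Using $\sum t_i = 1$ and the orthogonality conditions, the integrand reduces to $L_0^2 + t_1^2 M_1^2 + t_2^2 M_2^2 = 2 - 4(t_1^2 + t_2^2)$ on the pseudoeffective region and vanishes elsewhere; the pseudoeffective region is the quarter disk $\{t_1, t_2 \geq 0,\ t_1^2 + t_2^2 \leq 1/2\}$, which by the AM--GM inequality lies inside $T$ (tangent to the hypotenuse at $(1/2, 1/2)$). Polar coordinates then yield $\vol_X(\xi) = 3\pi/2$, and since $\BV$ is a multiplicative semigroup containing $\BQ_+$, the product $(2/3) \cdot (3\pi/2) = \pi$ also lies in $\BV$. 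The main obstacle I anticipate is ensuring that the pseudoeffective region within the Newton polytope $T$ is a full quarter disk, not one clipped by the sides of $T$: such clipping would produce a number of the form $q_1 + q_2 \pi$ with $q_1, q_2 \in \BQ_+$, from which the semigroup structure alone cannot isolate $\pi$. This is exactly why the $L_i$ must be chosen with $L_0^2$ small compared to $|M_i^2|$, and with $L_0, M_1, M_2$ mutually orthogonal so that the relevant conic is a circle centered at the vertex of $T$ corresponding to $\alpha = (m, 0, 0)$.
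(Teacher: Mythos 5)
Your proposal is correct and follows essentially the same approach as the paper: both build a $\BP^2$-bundle over $E \times E$ (with $E$ without CM), choose three integral line bundles so that the pseudoeffective region within the fiber simplex is a round disk segment whose area produces a $\pi$, and then evaluate the Cutkosky integral in polar coordinates, finishing with the multiplicative semigroup property of $\BV$ to strip off the rational factor. The only cosmetic differences are that you work in an explicit $F_1, F_2, \Delta$ basis and arrange a quarter disk tangent to the hypotenuse of the simplex, whereas the paper uses an abstract orthogonal basis $A, D_1, D_2$ and places a full disk strictly inside the simplex.
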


\subsection{Acknowledgments}

The authors would like to thank Thomas Bauer, the thesis advisor of the first author, Matteo Costantini, Julian Großmann, Victor Lozovanu, Martin Lüdtke, Jakob Stix, and Jürgen Wolfart for many helpful discussions and suggestions. The second author would also like to thank the Centre de Recerca Matem\`atica and the Universit\'e de Caen for their hospitality. Finally the authors would like to express their gratitude to Alex Küronya, the thesis advisor of the second author, for suggesting the problem and the cooperation, his support and many useful comments.

\section{Preliminaries}

\subsection{Volumes on projective bundles}
Let $V$ be an irreducible projective variety of dimension $v$, $A_0,\dots,A_r$ Cartier divisors on $V$. We set \[\mcE = \mcO_V(A_0) \oplus \dots \oplus \mcO_V(A_r)\] and consider the 
projective bundle \[ X = \BP(\mcE)\,,  \]
which is an irreducible projective variety of dimension $d=v+r$.

One then has the following Lemma, which is probably known to experts. For lack of a suitable reference, we provide a proof. 

\begin{lemma}
\label{int}
\begin{equation*} \vol_X \left( \mcO_X(1) \right)= \frac{d!}{v!} \int_{\substack{\lambda_0+\dots+\lambda_r=1\\\lambda_i \geq 0}} \vol_V(\lambda_0 A_0 + \dots + \lambda_r A_r)\, \md \lambda_1 \dots \md \lambda_r.
\end{equation*}
\end{lemma}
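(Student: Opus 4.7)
The plan is to express $h^0(X,\mcO_X(n))$ as a sum indexed by the integer points of the simplex and then recognise it as a Riemann sum. The starting point is the projection $\pi\colon X=\BP(\mcE)\to V$: the identity $\pi_*\mcO_X(n)\cong\mathrm{Sym}^n\mcE$ for $n\geq 0$, combined with the splitting
\[\mathrm{Sym}^n \mcE \ \cong \ \bigoplus_{a_0+\dots+a_r=n}\mcO_V(a_0A_0+\dots+a_rA_r),\]
yields
\[h^0(X,\mcO_X(n)) \ = \ \sum_{a_0+\dots+a_r=n} h^0(V,\mcO_V(a_0A_0+\dots+a_rA_r)).\]

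Writing $\lambda_i=a_i/n$, the tuples $(a_0,\dots,a_r)\in\BZ_{\geq 0}^{r+1}$ with $\sum a_i=n$ form a lattice of mesh $1/n$ on the simplex $\{\lambda_i\geq 0,\ \sum\lambda_i=1\}$, each elementary cell carrying $r$-dimensional volume $1/n^r$. Applying the pointwise asymptotic $h^0(V,mD)=\tfrac{m^v}{v!}\vol_V(D)+o(m^v)$ (extended to all Cartier divisors $D$ with the convention $\vol_V(D)=0$ off the big cone) to each summand and normalising by $d!/n^d=d!/n^{r+v}$ gives
\[\frac{d!}{n^d}\,h^0(X,\mcO_X(n)) \ = \ \frac{d!}{v!}\cdot\frac{1}{n^r}\sum_{a_0+\dots+a_r=n}\vol_V(\lambda_0A_0+\dots+\lambda_rA_r) \ + \ o(1).\]
The right-hand side is exactly a Riemann sum for the integral in the statement. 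Since $\vol_V$ is continuous on $N_\BR^1(V)$, the integrand is continuous on the compact simplex, so Riemann-sum convergence finishes the proof.

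The main obstacle is making the $o(n^v)$ error term uniform in $a$, because for each fixed $n$ one applies the asymptotic expansion simultaneously to the $\binom{n+r}{r}$ divisors $\sum a_iA_i$. To handle this I would fix a very ample divisor $H$ on $V$ with $H-A_i$ effective for every $i$, producing the uniform bound $h^0(V,\sum a_iA_i)\leq h^0(V,nH)=O(n^v)$ whenever $\sum a_i=n$. Combined with the uniform continuity of $\vol_V$ on compact subsets of $N_\BR^1(V)$, this lets one squeeze the sum between Riemann sums of $\vol_V$ evaluated at rational perturbations $\sum\lambda_iA_i\pm\varepsilon H$, and then let $\varepsilon\to 0$. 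A conceptually cleaner alternative would be to choose an admissible flag on $X$ for which the Newton--Okounkov body of $\mcO_X(1)$ fibers over the simplex with fibers the Newton--Okounkov bodies of $\sum\lambda_iA_i$ on $V$; Fubini's theorem then delivers the identity directly.
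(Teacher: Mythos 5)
Your opening is identical to the paper's: push $\mcO_X(n)$ forward to $V$, decompose $h^0(X,\mcO_X(n))$ as a sum over the lattice points of the $n$-scaled simplex, and recognize the normalized sum as an approximation to the integral of $\vol_V$ over the simplex. The substantive issue — and the one the paper's proof is really about — is justifying the passage to the limit, since for each $n$ one is evaluating not a fixed function but a sequence $\chi_n(\lambda)\approx v!\,h^0(V,nD_\lambda)/n^v$ of functions over $\binom{n+r}{r}$ lattice points. The paper handles this with the dominated convergence theorem, citing \cite[Proposition 3.5.1]{ArchV} for a uniform bound on the error $\left|v!\,h^0(V,nD)/n^v-\vol_V(D)\right|$, which supplies both the dominating function and (implicitly) the pointwise convergence on the moving lattice.

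Your proposed domination is actually cleaner than the paper's on that one point: choosing a very ample $H$ with $H-A_i$ effective for all $i$ gives $h^0(V,\sum a_iA_i)\leq h^0(V,nH)=O(n^v)$ uniformly on $\sum a_i=n$, so the step functions are dominated by a constant without appealing to \cite{ArchV}. However, the remaining ``squeeze between Riemann sums at perturbations $\sum\lambda_iA_i\pm\varepsilon H$'' does not close the gap as stated. After the squeeze you are left with quantities of the form $v!\,h^0\bigl(V,n(D_\lambda\pm\varepsilon H)\bigr)/n^v$, and you would still need these to converge to $\vol_V(D_\lambda\pm\varepsilon H)$ \emph{uniformly} in $\lambda$ (or at least a.e.\ pointwise, with the lattice point $\lambda^{(n)}$ moving with $n$) in order to pass the Riemann sum to an integral — which is exactly the same difficulty you started with, just shifted by $\varepsilon H$. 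Uniform continuity of $\vol_V$ controls the limit function, not the rate of convergence of $h^0/n^v$ to it; some ingredient like the uniform error estimate of \cite[Proposition 3.5.1]{ArchV}, or an explicit Fujita-type approximation argument carried out uniformly over the compact simplex, is still needed here. Your Newton--Okounkov body alternative — fibering $\Delta_X(\mcO_X(1))$ over the $r$-simplex with fibers $\Delta_V(\sum\lambda_iA_i)$ and applying Fubini — is a genuinely different and, once the fibration is justified, cleaner route; it sidesteps the uniformity question entirely, at the cost of requiring the compatibility of the chosen flag with the bundle structure, which you would need to spell out.
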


\begin{proof}
Let $\pi: X \rightarrow V$ be the projection. The fact that $\pi_* \mcO_X(1) = \mcE$ yields
\[H^0(X, \mcO_X(k))= \bigoplus_{a_0+\dots+a_r=k} H^0(V,\mcO_V(a_0 A_0+ \dots + a_r A_r)) \,.\]
We then obtain
\begin{align}
 \limsup_k \frac{ h^0(X,\mcO_X(k))}{k^d/d!} &=\limsup_k \sum_{a_0+\dots+a_r=k} \frac{h^0(V,\mcO_V(a_0 A_0+ \dots + a_r A_r))}{k^d/d!} \nonumber \\
=\frac{d!}{v!}\limsup_k &\sum_{\substack{\lambda_0+\dots+\lambda_r=1\\ \lambda_i \in \frac{1}{k} \BN}} \frac{h^0(V,\mcO_V(k(\lambda_0 A_0+ \dots + \lambda_r A_r)))}{k^r k^v/v!} \label{zwischen} \ .
\end{align}
The Fujita approximation theorem \cite{Fujita} shows that the $\limsup$ on the right hand side of \eqref{zwischen} can be changed to a $\lim$. Let $\Delta$ denote the standard $r$-simplex. The right hand side can then
be regarded as the integral over $\Delta$ of a suitable step function $\chi_k$ with steps of ($r$-dimensional) volume $1/k^r$, and $\chi_k$ converges pointwise to
$\vol_V(\lambda_0 A_0 + \dots + \lambda_r A_r)$.

In order to complete the proof using the dominated convergence theorem, it suffices to show that all $\chi_k$ are bounded by an integrable function. For this we use \cite[Proposition 3.5.1]{ArchV} which implies that 
\[ \left| \frac{h^0(V,\mcO_V(k(\lambda_0 A_0+ \dots + \lambda_r A_r)))}{k^v/v!} - \vol_V(\mcO_V((\lambda_0 A_0+ \dots + \lambda_r A_r))) \right|    \]
is bounded in $k$ and since $\vol_V$ is bounded on $\Delta$ all $\chi_k$ are bounded by a constant over $\Delta$.
\end{proof}

\begin{rem}
Let $E$ be an elliptic curve without complex multiplication and take $V=E \times E$. The
nef cone of $V$ is equal to its effective cone and it is circular. Taking suitable Cartier Divisors $A_0$ and $A_1$ with $A_0$ ample and
$A_1$ not nef, Cutkosky utilizes the geometry of $\text{Nef}(V)$ to deduce that $\vol_X(\mcO_X(1))$ is a quadratic irrationality. 
\end{rem}

\subsection{Abelian Varieties}

Let $(A,L_0)$ be a polarized $d$-dimensional complex abelian variety, then 
each $L \in N^1(A)$ induces a homomorphism $\phi_{L}:A \rightarrow \hat{A}$, where $\hat{A}$ is the dual abelian variety of $A$. The map $\phi_{L_0}$ is an
isogeny, and we obtain an isomorphism of
$\BQ$-vector spaces
\begin{align} 
\begin{split}
	\varphi: N^1_\BQ(A) &\rightarrow \End^s_\BQ(A) \label{Iso} \\
L &\mapsto \phi_{L_0}^{-1} \phi_L \,, 
\end{split}
\end{align}
where $\End^s_\BQ(X)$ denotes the subspace of $\End_\BQ(A)$ fixed by the Rosati involution with respect to the polarization $L_0$.\\

\cite[Proposition 5.2.3]{Birkenhake04} shows that the characteristic polynomial $P_{f_L}^a$ of the analytic representation $f_L$ of $\phi_{L_0}^{-1} \phi_L \in \End^s_\BQ(A)$ satisfies 
\[ P_{f_L}^a(t)=\frac{(t L_0-L)^d}{d_0 d!}\,, \]
where $d_0$ denotes the degree of the polarization $L_0$. \\

From now on we concentrate on abelian varieties with real multiplication. 

\begin{lemma}
	\label{Hlemma}
	Let $K$ be a totally real number field of degree $d$ over $\BQ$ with primitive element $\alpha$. Then there exists a $d$-dimensional polarized simple abelian variety $(A,L_0)$ and a line bundle $L$ on $A$ such that the volume function $\vol_A(t L_0-L)$ restricted to the nef cone of $A$ is given by a rational multiple of the minimal polynomial of $\alpha$.    
\end{lemma}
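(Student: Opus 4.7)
The strategy is to leverage the Birkenhake--Lange identity $P_{f_L}^a(t) = (t L_0 - L)^d / (d_0 d!)$ recalled above: on the nef cone $(t L_0 - L)^d$ coincides with the volume, so if we can arrange $\varphi(L) = \alpha$, then the analytic characteristic polynomial $P^a_{f_L}$ will equal the minimal polynomial $m_\alpha$ exactly, and the volume function on the nef cone will be the rational multiple $d_0\, d!\, m_\alpha(t)$ of it.

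I first construct a simple $d$-dimensional polarized abelian variety $(A, L_0)$ with $\End^0_\BQ(A) = K$. Such varieties exist for every totally real $K$ by the classical theory of abelian varieties with real multiplication (cf.\ \cite{Birkenhake04}); simplicity is automatic because the endomorphism algebra is a field. Since $K$ is totally real, the Rosati involution restricted to $K$ is the identity, so $\End^s_\BQ(A) = K$ and $\varphi$ becomes a $\BQ$-linear identification of $N^1_\BQ(A)$ with $K$.

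Under this identification, the tangent space $T_0 A \cong \BC^d$ is a free rank-one module over $K \otimes_\BQ \BR \cong \BR^d$, where the $\BR^d$-factors are indexed by the real embeddings $\sigma_i \colon K \to \BR$. It therefore splits as a direct sum of $d$ complex lines, each an eigenline for $K$ acting via some $\sigma_i$. Consequently, the analytic representation of any $\beta \in K$ on $T_0 A$ has eigenvalues $\sigma_1(\beta), \dots, \sigma_d(\beta)$ and characteristic polynomial $\prod_{i=1}^d (t - \sigma_i(\beta))$. For the primitive element $\beta = \alpha$ this product is exactly $m_\alpha(t)$.

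The remaining, and principal, obstacle is to realize $\alpha$ as $\varphi(L)$ for an integral $L \in N^1(A)$, not merely for a $\BQ$-class. Naive rescaling $L \rightsquigarrow N L$ would send $\alpha \mapsto N\alpha$, whose minimal polynomial is \emph{not} a rational multiple of $m_\alpha$, so clearing denominators is unavailable. One must instead match the arithmetic data carefully: choose $A$ so that its endomorphism ring contains $\BZ[\alpha]$ (achievable by picking an appropriate point in the Hilbert modular moduli space associated to an order of $K$ containing $\alpha$), and take a polarization $L_0$ for which the normalizing isogeny $\phi_{L_0}$ sends $N^1(A)$ onto a sublattice of $K$ containing $\alpha$. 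Once such an $L$ is produced, the formula yields $(t L_0 - L)^d = d_0 \, d! \, m_\alpha(t)$, and the identification of this with $\vol_A(t L_0 - L)$ on the nef cone completes the proof.
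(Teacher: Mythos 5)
Your argument tracks the paper's proof closely through the construction of $(A,L_0)$ with $\End_\BQ(A)=K$, the Rosati identification $\End^s_\BQ(A)=K$, and the appeal to the Birkenhake--Lange formula $(tL_0-L)^d = d_0\,d!\,P^a_{f_L}(t)$. Your eigenline argument for why the characteristic polynomial of the analytic representation of $\alpha$ equals $m_\alpha(t)$ is a cleaner justification than the paper's terse ``both have degree $d$ and vanish at $\alpha$'', and is correct.

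The gap is in your final paragraph. You declare the integrality of $L$ the ``principal obstacle'' and try to solve it inside the lemma by choosing $A$ so that $\End(A) \supseteq \BZ[\alpha]$, via a point of a Hilbert modular moduli space. This cannot work in general: $\alpha$ is an arbitrary primitive element of $K$ and need not be an algebraic integer, so $\BZ[\alpha]$ need not be contained in any order, while $\End(A)$ is always a finitely generated $\BZ$-module and hence consists of algebraic integers. Even when $\alpha$ is integral, the step from ``$\alpha \in \End(A)$'' to ``$\phi_{L_0}\alpha = \phi_L$ for an actual line bundle $L$ with the stated numerical class'' is asserted, not proved. The sketch is therefore not a proof of the lemma as you have set it up.

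The paper takes a different route: it contents itself with $L=\varphi^{-1}(\alpha)$ as a $\BQ$-class in the lemma and does not attempt to resolve integrality there. Integrality is handled only in the proof of the main theorem, by replacing $\alpha$ with $k\alpha$ for a suitable $k \in \BN$ (so that $kL$ is integral) and observing that the resulting integral of $m_{k\alpha}(t) = k^d m_\alpha(t/k)$ over the correspondingly rescaled interval differs from $\int_\beta^{t_0} m_\alpha(t)\,\md t$ only by a rational factor, which is harmless since the goal is primitivity, not a specific normalization. You correctly noticed that $m_{k\alpha}$ is not a rational multiple of $m_\alpha$, but drew the wrong conclusion that rescaling is ``unavailable''; the scale invariance one needs is of the integral, not of the polynomial. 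Dropping your last paragraph and instead either weakening the lemma to a $\BQ$-class $L$, or importing the paper's rescaling observation, would close the gap.
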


\begin{proof}
It is possible to construct a simple abelian polarized abelian variety with $\End_\BQ(A)=K$ \cite[Proposition 9.2.1]{Birkenhake04}. We also have $\End^s_\BQ(A)=\End_\BQ(A)$ since $K$ is totally real \cite[Proposition 5.5.7]{Birkenhake04}.

Take $(A,L_0)$ with this property and let $L$ be $\varphi^{-1}(\alpha)$. The minimal polynomial of $\alpha$ is equal to the characteristic polynomial of the analytic representation of the corresponding endomorphism because they both have degree $d$ and vanish at $\alpha$. By the above discussion and the Riemann--Roch theorem the lemma is proven.  
\end{proof}

In the case of abelian varieties the boundary of the nef cone has the following property \cite[Corollary 1.5.18]{PAG}.

\begin{lemma}
	Let $A$ be an abelian variety of dimension $d$ and $\delta \in N_\BR^1(A)$ a numerical equivalence class on $A$ which lies in the boundary of the nef cone. Then \[(\delta^d)=0.\] 
\end{lemma}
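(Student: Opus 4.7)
The plan is to reduce the claim to a standard characterization of ample classes on abelian varieties and then conclude by contrapositive.

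First, it is a standard fact that the ample cone $\mathrm{Amp}(A) \subseteq N_\BR^1(A)$ is open and equals the interior of the closed nef cone. Consequently any $\delta$ in the boundary $\partial \mathrm{Nef}(A)$ is nef but not ample, and it suffices to prove the stronger assertion: on an abelian variety $A$ of dimension $d$, a nef class $L \in N_\BR^1(A)$ with $(L^d)>0$ is already ample. The lemma then follows, since a boundary class $\delta$ cannot be ample and therefore must satisfy $(\delta^d)=0$.

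To prove this stronger statement, I would use the description of numerical classes on $A$ by Hermitian forms on the universal cover $\BC^d$. Fixing a polarization $L_0$ as in the preliminaries and extending the map $\varphi$ linearly to $\BR$-coefficients, one identifies $N_\BR^1(A)$ with an $\BR$-subspace of $\End^s_\BR(A)$, and via the analytic representation with a space of real Hermitian forms on $\BC^d$. Under this correspondence ample classes are precisely the positive definite forms; the nef cone, being the closure of the ample cone, then corresponds exactly to the positive semi-definite forms. Moreover, setting $t=0$ in the formula
\[
P_{f_L}^a(t)=\frac{(tL_0-L)^d}{d_0\, d!}
\]
quoted from \cite[Proposition 5.2.3]{Birkenhake04} yields $(L^d)=d_0\cdot d!\cdot \det(f_L)$, so up to the positive factor $d_0\cdot d!$ the top self-intersection number $(L^d)$ is nothing but the determinant of the associated Hermitian form.

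Combining these observations, if $\delta \in \partial \mathrm{Nef}(A)$ then the corresponding Hermitian form $H_\delta$ is positive semi-definite but not positive definite, hence has a nontrivial kernel and vanishing determinant; therefore $(\delta^d)=0$. The main technical point is the extension of the nef-equals-positive-semi-definite characterization from $\BQ$-classes (which is the classical statement of Appell--Humbert) to $\BR$-classes; this is a routine continuity argument using that positive semi-definiteness is a closed condition and that ample $\BQ$-classes are dense in $\mathrm{Nef}(A)$.
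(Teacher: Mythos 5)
Your argument is correct, but it is worth pointing out that the paper does not actually prove this lemma: it simply cites \cite[Corollary 1.5.18]{PAG}, where the statement is deduced from the Campana--Peternell extension of the Nakai--Moishezon criterion to $\BR$-divisors together with the homogeneity of an abelian variety under translation (indeed, the next lemma in the paper records the more general homogeneous-space statement). Your route is therefore genuinely different: instead of invoking Nakai--Moishezon for $\BR$-classes, you use the linear-algebraic model of $N^1(A)$ by Hermitian forms on the universal cover. Under the identification $L \mapsto f_L$ the ample cone is the positive-definite cone and the nef cone is the positive-semidefinite cone, and the relation $(L^d) = d_0\,d!\,\det(f_L)$ (obtained by evaluating the characteristic polynomial at $t=0$ and comparing constant terms) immediately gives $(\delta^d)=0$ on the boundary. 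This is an attractive and self-contained proof that trades the generality of the Campana--Peternell criterion for an argument special to abelian varieties.

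Two small points you gloss over but which do hold. First, the identity $(L^d) = d_0\,d!\,\det(f_L)$ is quoted for integral $L$, but both sides are homogeneous degree-$d$ polynomials on $N^1(A)$, so the identity extends to $\BR$-classes by linearity. Second, to identify the boundary of the nef cone inside $N^1_\BR(A)$ (which may be a proper subspace of the full space of Hermitian forms) with the positive-semidefinite-but-not-definite locus, one should note that if $H$ is semidefinite with nontrivial kernel vector $v$, then $H - \varepsilon H_{L_0}$ lies in $N^1_\BR(A)$ and satisfies $(H-\varepsilon H_{L_0})(v,v)<0$, so $H$ is not an interior point; conversely positive-definite forms are interior since definiteness is an open condition. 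This pins down that ``boundary of nef'' indeed means ``semidefinite with vanishing determinant'', which is what your last step uses.
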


Furthermore abelian varieties have the useful property that the nef cone and the pseudoeffective cone coincide. This is even true in a more general setting \cite[Example 1.4.7]{PAG}.

\begin{lemma}
	Let $V$ be a complete variety with a connected algebraic group acting transitively on it.
	Then every effective divisor on $V$ is nef.  
\end{lemma}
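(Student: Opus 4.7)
The plan is to show that an effective divisor $D$ intersects every irreducible curve $C$ non-negatively. By linearity of intersection and positivity of coefficients, it suffices to treat the case where $D$ is a prime divisor. If $C \not\subset D$, then $D \cdot C \geq 0$ holds automatically since $D$ is effective and meets $C$ properly. The only subtle case is $C \subset D$, where the self-intersection-type behavior has to be controlled; the key idea is to replace $D$ by a translate that avoids $C$.

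More precisely, let $G$ be the connected algebraic group acting transitively on $V$, and for each $g \in G$ let $t_g : V \to V$ denote the action. I would first observe that, because $D \neq V$, there exists a point $v \in V \setminus D$. Fix any $c \in C$; by transitivity there is some $g \in G$ with $t_g(c) = v$, and then $c \notin t_g^{-1}(D) = t_{g^{-1}}^\ast D$. Hence the translate $D' := t_{g^{-1}}^\ast D$ is an effective divisor with $C \not\subset D'$, so $D' \cdot C \geq 0$.

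The remaining point is to see that $D'$ and $D$ have the same intersection number with $C$. For this I would use that the assignment $g \mapsto t_g^\ast D$ gives an algebraic family of effective divisors on $V$ parameterized by the connected variety $G$; any two members of such a family are algebraically equivalent, hence numerically equivalent. Consequently $D \cdot C = D' \cdot C \geq 0$, which shows that $D$ is nef.

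The main obstacle is the standard fact that translates of a divisor under a connected algebraic group action are algebraically equivalent. This is a general principle (one produces a flat family over $G$ whose fibers are the translates, so any two fibers are algebraically equivalent in the sense of \cite[\S 1.1.B]{PAG}), and once it is in hand the rest of the argument is essentially formal. The only other point to be careful about is the reduction to prime divisors: this uses that an effective $\BQ$-divisor is a $\BQ_{\geq 0}$-combination of prime divisors and that nefness is preserved under such combinations, so nefness of each prime component suffices.
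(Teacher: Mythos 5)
Your argument is correct and is the standard homogeneity argument, which is exactly the proof behind \cite[Example 1.4.7]{PAG}; the paper itself offers no proof and simply cites that reference. One notational slip: the set-theoretic preimage $t_g^{-1}(D)$ is the pullback $t_g^\ast D$, not $t_{g^{-1}}^\ast D$ (the latter is the image $t_g(D)$), but this has no bearing on the argument, which only needs that \emph{some} translate of $D$ misses a chosen point of $C$. It is also worth noting, for the reduction to prime components, that $V$ is smooth as a homogeneous space, so every prime Weil divisor is Cartier and the intersection numbers $D_i\cdot C$ are all defined.
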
 	

\section{Algebraic Volumes of Divisors}
\noindent We now proceed with the proof of the main theorem.

\begin{proof}[Proof of Theorem \ref{thm:haupt}]
Let $\alpha \in K$ be a primitive element. We use Lemma \ref{Hlemma} to find a polarized abelian variety $(A,L_0)$ with the property that $\vol_A(t L_0-L)$ restricted to the nef cone of $A$ is given by a rational multiple of the minimal polynomial of $\alpha$, where $L$ corresponds to $\alpha$ via the isomorphism \eqref{Iso}.
We choose $t_0 \in \BN$ large enough so that $t_0 L_0-L$ is ample and $-t_0 L_0-L$ is not pseudoeffective.
	
Considering Cutkosky's construction with $r=1, A_0 = -t_0 L_0-L$, and $A_1 = t_0 L_0-L$ so that \[X=\BP(\mcO_A(-t_0 L_0-L) \oplus \mcO_A(t_0 L_0-L)) \,,\]
we will compute $\vol_X(\mcO_X(1))$.
	
Lemma \ref{int} yields that $\vol_X(\mcO_X(1))$ is a rational
multiple of the integral 
\[ \int_{-t_0}^{t_0} \vol_A(t L_0 - L) \, \mathrm{d}t \,, \]
while Lemma \ref{Hlemma} shows that the latter is a rational
multiple of \[ \int_\beta^{t_0} m_\alpha(t) \, \mathrm{d}t   \]
with $m_\alpha$ the minimal polynomial of $\alpha$ over $\BQ$, and $\beta$ the largest root of $m_\alpha$ (which is still a primitive element of $K$ since $K$ is galois). 
This number is not necessarily a primitive element of $K$, but we will show that primitivity holds for a general choice of
$\alpha$. This will be established in the following lemmata. \\ 
Note also that the construction only works if $L$ is an integral Cartier divisor. But this is no problem since for $k \in \BN$ we have $m_{k \alpha}(t)=k^{[K:\BQ]} m_\alpha(t/k)$ and therefore \[ \int_{k \beta}^{k t_0} m_\alpha(t) \, \mathrm{d}t = \int_\beta^{t_0} m_\alpha(t) \, \mathrm{d}t \,. \]
\end{proof}

\begin{lemma}
\label{generic}
Let $K$ be a number field (not necessarily totally real) with $[K:\BQ]=d$. There exists a nonzero polynomial $G \in \BQ[X_0,\dots,X_{d-1}]$ with the following property: if a primitive element $\alpha \in K$ has minimal polynomial $m_\alpha = a_0 + a_1 X + \dots + a_{d-1} X^{d-1} + X^d$ then the nonvanishing of $G(a_0,\dots,a_{d-1})$ is equivalent to 
$M_\alpha(\alpha)$ being a primitive element for $K$, where $M_\alpha$ denotes the antiderivative of $m_\alpha$ satisfying $M_\alpha(0)=0$.
\end{lemma}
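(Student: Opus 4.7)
The plan is to construct $G$ as a discriminant-type polynomial in the coefficients of $m_\alpha$ that detects whether the $d$ Galois conjugates of $M_\alpha(\alpha)$ are pairwise distinct. I would start by letting $\alpha=\alpha_1,\dots,\alpha_d$ denote the roots of $m_\alpha$ in an algebraic closure; these are pairwise distinct since $m_\alpha$ is separable. Any field embedding $K\hookrightarrow\overline{\BQ}$ is determined by $\alpha\mapsto\alpha_i$, so the Galois conjugates of $M_\alpha(\alpha)\in K$ are exactly the values $M_\alpha(\alpha_i)$. Hence $M_\alpha(\alpha)$ is primitive for $K$ precisely when all $M_\alpha(\alpha_i)$ are pairwise distinct.

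Working with formal variables $X_0,\dots,X_{d-1}$ and the formal roots $\alpha_1,\dots,\alpha_d$ of $Y^d+X_{d-1}Y^{d-1}+\dots+X_0$, I would then set
\[
G(X_0,\dots,X_{d-1}) := \prod_{1\le i<j\le d}\bigl(M(\alpha_i)-M(\alpha_j)\bigr)^2,
\]
where $M(Y)=X_0Y+\tfrac{X_1}{2}Y^2+\dots+\tfrac{X_{d-1}}{d}Y^d+\tfrac{1}{d+1}Y^{d+1}$ is the formal antiderivative. Since the expression is symmetric in the $\alpha_i$, the fundamental theorem of symmetric polynomials places it in $\BQ[X_0,\dots,X_{d-1}]$. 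The demanded equivalence is then automatic: $G(a_0,\dots,a_{d-1})\neq 0$ iff the conjugates $M_\alpha(\alpha_i)$ are distinct iff $M_\alpha(\alpha)$ is primitive for $K$.

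The only real obstacle is to check that $G$ is not the zero polynomial. For this I would evaluate at a single convenient specialization: the coefficients of $Y^d-1$ (so $a_0=-1$ and $a_1=\dots=a_{d-1}=0$), whose roots in $\BC$ are the $d$-th roots of unity $\zeta_k$. The antiderivative specializes to $M(Y)=-Y+Y^{d+1}/(d+1)$, and using $\zeta_k^{d+1}=\zeta_k$ one computes $M(\zeta_k)=-\tfrac{d}{d+1}\zeta_k$, values which are pairwise distinct. Thus $G$ is nonzero at this point. It does not matter that $Y^d-1$ is reducible: we merely need nonvanishing at \emph{some} point of $\BQ^d$ to conclude that $G\in\BQ[X_0,\dots,X_{d-1}]$ is not identically zero, which completes the proof.
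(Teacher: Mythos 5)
Your proof is correct, and it takes a genuinely different route from the paper's. The paper characterizes primitivity of $\beta=M_\alpha(\alpha)$ by requiring that $1,\beta,\dots,\beta^{d-1}$ be a $\BQ$-basis of $K$, and sets $G$ equal to the determinant of the change-of-basis matrix from the $\alpha$-power basis, reduced modulo $m_\alpha(\alpha)=0$; nonvanishing of that $G$ is established by exhibiting a concrete number field ($K=\BQ(\sqrt[d]{2})$, $\alpha=\sqrt[d]{2}$) for which $M_\alpha(\alpha)$ is visibly a nonzero rational multiple of $\alpha$, hence primitive. You instead characterize primitivity by distinctness of the Galois conjugates and take $G$ to be the ``discriminant-type'' expression $\prod_{i<j}(M(\alpha_i)-M(\alpha_j))^2$, landing in $\BQ[X_0,\dots,X_{d-1}]$ by symmetry. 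The two polynomials differ — in fact your $G$ equals the square of the paper's determinant times $\operatorname{disc}(m_\alpha)$, via the factorization of the relevant Vandermonde matrices — but both work. Your route has a couple of small advantages: the polynomial $G$ is written down explicitly rather than implicitly as a determinant of reduced powers, and the nonvanishing check is a pure evaluation at $(-1,0,\dots,0)\in\BQ^d$ that does not require irreducibility of the test polynomial or invoking a particular number field, since the formal $G$ is just a polynomial with rational coefficients. The paper's version is slightly shorter on the construction side since there is no symmetric-function step. Both are complete and correct.
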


\begin{proof}
Since $\alpha$ is a primitive element of $K$, the powers  of $\alpha$ constitute a basis of $K$. Therefore $M_\alpha(\alpha)$ is a primitive element for $K$ if the matrix $A$, whose $n$-th column consists of the coordinates of $M_\alpha(\alpha)^n$ with respect to this basis, has nonzero determinant.

The expression $M_\alpha(\alpha)^n$ is a polynomial in the coefficients $a_0,\dots,a_{d-1}$ of $m_\alpha$ and we can use $m_\alpha(\alpha)=0$ to eliminate powers of $\alpha$ with degree greater or equal $d$. This shows that $\det(A)$ is given by a polynomial $G$ in $a_0,\dots,a_{d-1}$ with the required property. Note that this polynomial depends only on $d$ and not on the number field $K$.

Suppose $G$ is the zero polynomial. Then $M_\alpha(\alpha)$ is never a primitive element for \emph{any} number field $K$ of degree $d$. But this is a contradiction to the case $K=\BQ(\sqrt[d]{2})$, where $M_\alpha(\alpha)$ is easily seen to be a primitve element for $\alpha=\sqrt[d]{2}$.       
\end{proof}

We now need to prove that the polynomial constructed in Lemma \ref{generic} does not vanish for every primitive element of the number field in question. 
 
\begin{prop}
\label{nonzero}
Let $K$ be a totally real number field with $[K:\BQ]=d$, and let $P \in \BQ[X_0,\dots,X_{d-1}]$ be a polynomial with the property that $P(a_0,\dots,a_{d-1})=0$ for the coefficients $a_0,\dots,a_{d-1}$ of the minimal polynomial of every primitive element of $K$. Then $P$ is the zero polynomial.    
\end{prop}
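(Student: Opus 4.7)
The plan is to reformulate the hypothesis as the vanishing of a polynomial on a Zariski-dense subset of affine $d$-space, via a Vieta-type parametrization of elements of $K$. I would fix an auxiliary primitive element $\alpha \in K$ with $\BQ$-embeddings $\sigma_1, \ldots, \sigma_d \colon K \hookrightarrow \BC$, parametrize elements of $K$ by $\mathbf{c} = (c_0, \ldots, c_{d-1})$ via $\beta(\mathbf{c}) := c_0 + c_1\alpha + \cdots + c_{d-1}\alpha^{d-1}$, and consider the polynomial map $\Phi$ sending $\mathbf{c}$ to the (signed) elementary symmetric polynomials in $\sigma_1(\beta(\mathbf{c})), \ldots, \sigma_d(\beta(\mathbf{c}))$. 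Whenever $\beta(\mathbf{c})$ is primitive, $\Phi(\mathbf{c})$ is precisely the coefficient tuple of the minimal polynomial of $\beta(\mathbf{c})$, so the hypothesis of the proposition becomes $(P \circ \Phi)(\mathbf{c}) = 0$ for all such $\mathbf{c} \in \BQ^d$.

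The first key step is to establish that $\Phi$ is dominant. Over $\BC$ the map $\Phi$ factors as the Vandermonde linear map $\mathbf{c} \mapsto (\sigma_i(\beta(\mathbf{c})))_i$, which is an isomorphism because the $\sigma_i(\alpha)$ are distinct, followed by the standard symmetric-function quotient map $\BC^d \to \BC^d$, which is surjective. Consequently the pullback of $\Phi$ on polynomial rings is injective.

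The second step is to verify that the non-primitive locus is cut out by a nontrivial polynomial condition: $\beta(\mathbf{c})$ fails to be primitive exactly when the discriminant $\Delta(\mathbf{c})$ of the characteristic polynomial of multiplication by $\beta(\mathbf{c})$ on $K$ vanishes, and $\Delta \in \BQ[\mathbf{c}]$ is not identically zero (it does not vanish at $\mathbf{c} = (0,1,0,\ldots,0)$, which recovers $\beta = \alpha$ itself). Consequently the set $S \subset \BQ^d$ of rational parameters with $\beta(\mathbf{c})$ primitive is the set of $\BQ$-points outside a proper Zariski-closed subset, hence is Zariski dense in affine $d$-space over $\BQ$.

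Putting the pieces together: Zariski density of $S$ forces the polynomial $P \circ \Phi$ to vanish identically, and dominance of $\Phi$ then forces $P = 0$. The main conceptual obstacle is verifying the dominance of $\Phi$ cleanly; everything else is a standard density argument. I note in passing that the totally real hypothesis is not needed in this argument, which in fact works for any number field $K$.
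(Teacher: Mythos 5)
Your proof is correct but takes a genuinely different route. The paper works in the Euclidean topology: it invokes the weak approximation theorem to show that the conjugate-vector image of $K$ is dense in $\BR^d$, removes the (proper) non-primitive locus while preserving density, and then observes that a nonzero $P$ would, via the fundamental theorem of symmetric polynomials, yield a nonzero polynomial relation among conjugates vanishing on this dense set --- a contradiction. You work entirely in the Zariski topology: after fixing an auxiliary primitive $\alpha$, you write the coefficient map $\Phi$ as the Vandermonde linear isomorphism followed by the symmetric-function quotient $\BC^d\to\BC^d$, hence dominant; the non-primitive locus is the zero set of the discriminant, a proper closed subvariety not containing $(0,1,0,\dots,0)$, so the rational primitive parameters are Zariski dense; therefore $P\circ\Phi\equiv 0$, and dominance (injectivity of $\Phi^*$) forces $P=0$. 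Your argument trades weak approximation for the discriminant and dominance of $\Phi$, is purely algebraic with no analytic input, and, as you rightly note, makes it manifest that the totally real hypothesis is superfluous --- the paper's closing reference to $\BR^{r_1}\times\BC^{r_2}$ suggests the authors noticed this too but did not exploit it. Both proofs ultimately rest on the same structural fact, namely that the root-to-coefficient map is an $S_d$-quotient with injective pullback; the paper phrases this through the fundamental theorem of symmetric polynomials while you phrase it through dominance.
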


\begin{proof}
The weak approximation theorem \cite[Theorem 3.4]{Neukirch} shows that the image of the map \[ K \rightarrow \BR^{d} \] which maps an element of $K$ to its conjugates is dense with respect to the usual topology.

An element $\alpha \in K$ is primitive if and only if the components of the image of $\alpha$ in $\BR^{d}$ are pairwise different. This means that the image of all primitive elements in $K$ will be the subset of the image of $K$ whose elements have pairwise different components and this is clearly still dense in $\BR^{d}$. 

Now assume there is a polynomial $P$ with the asserted property. Then the relation $P(a_0,\dots,a_{d-1})=0$ for the coefficients of the minimal polynomial of every primitive element of $K$ implies a relation between the conjugates of every primitive element and this relation is nontrivial if and only if $P$ is nonzero because of the fundamental theorem of symmetric polynomials. Now $P$ has to be zero because its zero set in $\BR^{r_1} \times \BC^{r_2}$ contains a dense subset in the standard topology.
\end{proof}

The following lemma then concludes the proof of Theorem \ref{thm:haupt}.

\begin{lemma}
Let $K$ be a totally real number field with $[K:\BQ]=d$ and let \[S:=\{\alpha \in K \mid \text{$\alpha$ and $M_\alpha(\alpha)$ primitive elements of $K$}\}\]  where $M_\alpha$ denotes the antiderivative of the minimal polynomial of $\alpha$ with $M_\alpha(0)=0$. For a fixed isomorphism $K \rightarrow \BQ^d$ the image of $S$ is a nonempty Zariski open subset of $\BQ^d$.  
\end{lemma}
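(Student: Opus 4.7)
The plan is to display $\Phi(S) \subseteq \BQ^d$ (with $\Phi \colon K \to \BQ^d$ the fixed isomorphism) as the complement of the zero locus of a single polynomial in $\BQ[X_1,\dots,X_d]$, and then to use the previous two results to show that this polynomial is not identically zero.

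First, identify each $\alpha \in K$ with its coordinate vector $c = (c_1,\dots,c_d) \in \BQ^d$. The multiplication-by-$\alpha$ map on $K$ is a $\BQ$-linear endomorphism whose matrix in the chosen basis has entries that are $\BQ$-linear in $c$. Consequently, the coefficients $b_0(c),\dots,b_{d-1}(c)$ of its characteristic polynomial are polynomials in $c$ with rational coefficients, and so is the discriminant $\Delta(c)$ of that characteristic polynomial. The element $\alpha$ is a primitive element of $K$ if and only if its characteristic polynomial is separable, i.e.\ $\Delta(c) \neq 0$; in that case the characteristic and minimal polynomials coincide, so the coefficients $a_i$ of $m_\alpha$ equal $b_i(c)$.

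Now apply Lemma \ref{generic} to obtain the polynomial $G \in \BQ[X_0,\dots,X_{d-1}]$, which depends only on $d$, with the property that for primitive $\alpha$ one has $M_\alpha(\alpha)$ primitive if and only if $G(a_0,\dots,a_{d-1}) \neq 0$. Combining this with the description above gives
\[ \Phi(S) = \left\{ c \in \BQ^d \;\middle|\; F(c) := \Delta(c)\cdot G\bigl(b_0(c),\dots,b_{d-1}(c)\bigr) \neq 0 \right\}, \]
so $\Phi(S)$ is indeed the complement of the vanishing locus of a single polynomial $F \in \BQ[X_1,\dots,X_d]$, hence Zariski open.

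It remains to verify that $F$ is not the zero polynomial. The discriminant $\Delta$ is certainly nonzero (primitive elements exist), so it suffices to find one $c$ with $\Delta(c) \neq 0$ and $G(b_0(c),\dots,b_{d-1}(c)) \neq 0$. But Lemma \ref{generic} asserts that $G$ is a nonzero element of $\BQ[X_0,\dots,X_{d-1}]$, and Proposition \ref{nonzero} (applied to $P = G$, using that $K$ is totally real) then produces a primitive element $\alpha \in K$ whose minimal polynomial coefficients satisfy $G(a_0,\dots,a_{d-1}) \neq 0$. The coordinate vector of this $\alpha$ lies in $\Phi(S)$, completing the proof.

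The only non-routine aspect is the bookkeeping step of recognizing that the coefficients of the characteristic polynomial (and hence the non-primitivity locus and the $G$-condition) are genuinely polynomial in the linear coordinates $c$; once this is in place, nonemptiness is immediate from the already-established Lemma \ref{generic} and Proposition \ref{nonzero}.
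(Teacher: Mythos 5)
Your proposal is correct and follows essentially the same strategy as the paper: identify polynomials in the coordinate vector whose non-vanishing detects, respectively, primitivity of $\alpha$ and primitivity of $M_\alpha(\alpha)$, then invoke Lemma \ref{generic} and Proposition \ref{nonzero} to conclude the resulting open set is nonempty. The only (cosmetic) differences are that you use the discriminant of the characteristic polynomial as the primitivity detector, where the paper reuses the power-basis determinant argument from Lemma \ref{generic}, and you exhibit an explicit point rather than appealing to irreducibility of $\BQ^d$ to deduce nonemptiness.
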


\begin{proof}
Let $v_1, \dots, v_d$ be a $\BQ$-Basis of $K$. The same argument as in Lemma \ref{generic} shows that for $\lambda_1,\dots,\lambda_d \in \BQ$ the linear combination $x=\lambda_1 v_1+\dots+\lambda_d v_d$ is a primitive element of $K$ if and only if $H(\lambda_1,\dots,\lambda_d) \neq 0$ for a certain nonzero $H \in \BQ[X_1,\dots,X_d]$. By Lemma \ref{generic} and Proposition \ref{nonzero} and the fact that whenever $\lambda_1 v_1+\dots+\lambda_d v_d$ is a primitive element, the coefficients of its minimal polynomial are polynomials in $\lambda_1,\dots,\lambda_d$, we conclude that there is a nonzero $B \in \BQ[X_1,\dots,X_d]$ with the property that for $B(\lambda_1,\dots,\lambda_d) \neq 0$ we have $M_x(x)$ is primitive.

The intersection of the complements of the zero sets of $H$ and $B$ then has the asserted properties, in particular it is nonempty because $\BQ^d$ is irreducible with respect to the Zariski topology.         
\end{proof}

Our results raise the following question:

\begin{question}
Is it possible to extend this method to show that every nonnegative (totally) real algebraic number appears as $\vol_X(D)$ for some pair $(X,D)$?
\end{question}

In the construction we used an algebraic variety of dimension $d+1$ to obtain an algebraic volume of degree $d$. Consequently we ask the following question. 
\begin{question}
\label{CD}
Is there a constant $C_d$ such that if $\vol_X(D)$ is algebraic over $\BQ$, then we have $\deg_\BQ \vol_X(D) \leq C_d$ for every algebraic variety $X$ of dimension $d$ and every Cartier divisor $D$ on $X$.
\end{question}

We show that the bound $C_d$ can not be linear in $d$.

\begin{lemma}
For every pair of odd prime numbers $p<q$ with $p \nmid q-1$ there exists a variety $V$ of dimension $p+q+2$ and a Cartier divisor $D$ on $V$ such that $\vol_V(D)$ is algebraic over $\BQ$ of degree $pq$.    
\end{lemma}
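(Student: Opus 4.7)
The plan is to combine two applications of Theorem~\ref{thm:haupt} on a product via the K\"unneth formula. First, I would choose totally real Galois extensions $K_1, K_2$ of $\BQ$ of degrees $p$ and $q$ respectively; such fields exist as the unique subfields of degree $p$ inside the real cyclotomic fields $\BQ(\zeta_\ell)^+$ for any prime $\ell \equiv 1 \pmod{2p}$, whose existence follows from Dirichlet's theorem, and symmetrically for $q$. Both Galois groups are cyclic of prime order, $K_1 \cap K_2 = \BQ$ since $\gcd(p, q) = 1$, and hence $[K_1 K_2 : \BQ] = pq$ with Galois group $G_1 \times G_2 \cong C_p \times C_q \cong C_{pq}$.

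Next I apply Theorem~\ref{thm:haupt} to each field to obtain smooth projective varieties $X_1, X_2$ of dimensions $p+1$ and $q+1$ with Cartier divisors $D_1, D_2$ satisfying $\vol_{X_i}(D_i) = \alpha_i$ a primitive element of $K_i$. Set $V := X_1 \times X_2$, of dimension $p+q+2$, and $D := \pi_1^* D_1 + \pi_2^* D_2$. The K\"unneth identity $h^0(V, nD) = h^0(X_1, nD_1) \cdot h^0(X_2, nD_2)$ combined with the asymptotics $h^0(X_i, nD_i) \sim \alpha_i \cdot n^{\dim X_i}/(\dim X_i)!$ yields
\[
\vol_V(D) \;=\; \binom{p+q+2}{p+1}\, \alpha_1\, \alpha_2 .
\]

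It remains to verify that $\alpha_1 \alpha_2$ has degree $pq$ over $\BQ$. An element $(\sigma_1, \sigma_2) \in G_1 \times G_2$ stabilizes $\alpha_1 \alpha_2$ exactly when $\sigma_1(\alpha_1)/\alpha_1 = \alpha_2/\sigma_2(\alpha_2)$; this common value lies in both $K_1$ and $K_2$, hence in $\BQ$. If $\sigma_1$ generates $G_1$, iterating it forces $\alpha_1^p \in \BQ$, and so $K_1 = \BQ(\sqrt[p]{c})$ for some rational $c$. But a Galois totally real extension of odd prime degree cannot take this form, since its normal closure would contain the non-real primitive $p$-th root of unity $\zeta_p$. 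Hence $\sigma_1 = \mathrm{id}$ and symmetrically $\sigma_2 = \mathrm{id}$; the stabilizer of $\alpha_1 \alpha_2$ is trivial, so $\BQ(\alpha_1 \alpha_2) = K_1 K_2$ and $\vol_V(D)$ is algebraic of degree $pq$.

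I expect the delicate step to be this last one: Theorem~\ref{thm:haupt} only produces \emph{some} specific primitive element $\alpha_i$ rather than an arbitrary one, and one must rule out a priori that the product $\alpha_1 \alpha_2$ degenerates into one of the intermediate subfields of the lattice $C_p \times C_q$. The obstruction --- namely that $\alpha_i$ is not a pure radical --- uses critically both the total reality and the oddness of the prime degree of $K_i$. The hypothesis $p \nmid q - 1$ is not itself invoked in the Galois computation above (the compositum of the $K_i$'s is automatically cyclic $C_{pq}$), but it may be recorded to emphasize that under this condition $C_{pq}$ is the unique isomorphism type of group of order $pq$, pinning down $K_1 K_2$ as the only abelian number field of that degree produced by the construction.
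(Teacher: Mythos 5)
Your proof is correct and follows the paper's approach closely: apply Theorem~\ref{thm:haupt} twice to produce $(X_i, D_i)$ with $\vol_{X_i}(D_i) = \alpha_i$ primitive in totally real Galois fields $K_i$ of degrees $p$ and $q$; take $V = X_1 \times X_2$ with $D = \pi_1^* D_1 + \pi_2^* D_2$; use the K\"unneth formula to get $\vol_V(D)$ as a nonzero rational multiple of $\alpha_1\alpha_2$; and verify that $\alpha_1 \alpha_2$ is a primitive element of $K_1 K_2$. The only real divergence is in the primitivity check. The paper argues structurally: since $\gcd(p,q)=1$, the Galois group of $K_1K_2/\BQ$ is $\BZ/pq\BZ$, whose only nontrivial intermediate fields are $K_1$ and $K_2$, and $\alpha_1\alpha_2 \in K_i$ would force $\alpha_j = (\alpha_1\alpha_2)\alpha_i^{-1} \in K_i$, contradicting $K_1 \cap K_2 = \BQ$. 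Your explicit stabilizer computation reaches the same conclusion but by a more circuitous route: once you have $\sigma_1(\alpha_1) = c\,\alpha_1$ with $c \in \BQ$ and $\sigma_1^p = \mathrm{id}$, it follows immediately that $c^p = 1$, hence $c = 1$ (since $c$ is rational and $p$ is odd), hence $\sigma_1$ fixes the primitive element $\alpha_1$ and must be the identity; the detour through $\alpha_1^p \in \BQ$, the identification $K_1 = \BQ(\sqrt[p]{c'})$, and the non-total-reality of that field is correct but unnecessary. Finally, your observation that the hypothesis $p \nmid q-1$ is never actually invoked is accurate --- the paper's own proof also does not use it, and the compositum $K_1K_2$ is cyclic of order $pq$ purely because $\gcd(p,q)=1$.
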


\begin{proof}
Given $p$ and $q$, we use Theorem $\ref{thm:haupt}$ to find a pair $(X,M)$ with $\dim(X)=p+1$, and a pair $(Y,E)$ with $\dim(Y)=q+1$ and the additional property that $\BQ(\vol_X(M))$ and $\BQ(\vol_Y(E))$ are Galois of degree $p$ and $q$ respectively. This implies that $\BQ(\vol_X(M),\vol_Y(E))$ is Galois with Galois group $\BZ/pq\BZ$.

Because of this the only nontrivial intermediate fields are $\BQ(\vol_X(M))$ and $\BQ(\vol_Y(E))$ and therefore $\vol_X(M) \cdot \vol_Y(E)$ has to be a primitive element of $\BQ(\vol_X(M),\vol_Y(E))$.

We now take $V= X \times Y$ and $D=p_1^* M \otimes p_2^* E$, where $p_i$ is the $i$-th projection, and obtain the assertion from the Künneth formula.  
\end{proof}

\section{Realizing $\pi$ as volume of a divisor}
\noindent In this section we show how Cutkosky's construction can be used to realize $\pi$ as the volume of a divisor on a projective bundle. The idea is to utilize the shape of the nef cone of the product of an elliptic curve with itself. This was already used in \cite{Volumefunctions} to show that there exist divisors with transcendental volume.\\

Our exposition follows \cite{Volumefunctions}. Let $E$ be an elliptic curve without complex multiplication and set $V=E \times E$. A complete description of the nef and pseudoeffective cone is given by \cite[Lemma 1.5.4]{PAG}.

The nef cone is equal to the pseudoeffective cone and the dimension of the N\'eron--Severi space is 3. The Hodge index theorem then implies that the pseudoeffective cone is circular and that we can choose an orthogonal basis $A, D_1, D_2$ of $N^1_\BR(X)$ such that $A$ is integral and ample and $D_1, D_2$ are integral with \[A^2=-D_1^2=-D_2^2=N\] for some $N \in \BN$. 

\begin{prop}
On $V$ take \[M_0 = A-2 D_1 - 2 D_2,\, M_1 = A + 2 D_1 -2 D_2,\, M_2 = A + 4 D_1\,,\] and let $X=\BP(\mcO_V(M_0) \oplus \mcO_V(M_1) \oplus \mcO_V(M_2))$. Then the volume of $\mcO_X(1)$ is a rational multiple of $\pi$. 
\end{prop}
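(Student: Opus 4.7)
The plan is to apply Lemma~\ref{int} with $r=v=2$ and $d=4$, which reduces $\vol_X(\mcO_X(1))$ to $12$ times the integral of $\vol_V(\lambda_0 M_0+\lambda_1 M_1+\lambda_2 M_2)$ over the standard $2$-simplex $\Delta$ in $(\lambda_1,\lambda_2)$-coordinates, and then to exploit the circular shape of the nef cone of $V=E\times E$ together with the fact that on $V$ the nef and pseudoeffective cones coincide.

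Expanding in the orthogonal basis $A, D_1, D_2$, the constraint $\lambda_0+\lambda_1+\lambda_2=1$ forces the coefficient of $A$ to be identically $1$, while the coefficients $b_1, b_2$ of $D_1, D_2$ become affine linear functions of $(\lambda_1,\lambda_2)$. I would write out the affine map $\Phi: (\lambda_1,\lambda_2) \mapsto (b_1, b_2)$ explicitly, read off its constant (nonzero integer) Jacobian, and change variables accordingly. Under $\Phi$ the simplex $\Delta$ maps to a triangle $\Delta''$ whose vertices are the $(b_1,b_2)$-images of $M_0, M_1, M_2$.

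By the orthogonality relations $A^2=-D_1^2=-D_2^2=N$, a class $A+b_1 D_1+b_2 D_2$ lies in the nef cone precisely when $b_1^2+b_2^2\leq 1$, and there $\vol_V = N(1-b_1^2-b_2^2)$; outside the nef cone (equivalently, outside the pseudoeffective cone by the lemma on transitive group actions) the volume vanishes. The crucial step is therefore to check that the closed unit disk $\{b_1^2+b_2^2\leq 1\}$ is entirely contained in $\Delta''$. Granted this containment, the integral collapses to an integral of $N(1-r^2)$ over the full unit disk, which by a one-line polar-coordinates computation gives $N\pi/2$. The prefactor $12$, the Jacobian of $\Phi$, and the factor $N$ then combine into a rational multiple of $\pi$, as desired.

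The main obstacle is the disk-containment verification: one computes the three lines supporting the edges of $\Delta''$ and checks that the perpendicular distance from the origin to each is at least $1$. The specific integer coefficients in $M_0, M_1, M_2$ are tuned precisely so that this holds strictly. If any edge were to cut into the unit disk, the integrand would pick up contributions from circular segments, producing an expression of the form $a\pi+b$ with $b$ an algebraic number that is typically nonzero, instead of a clean rational multiple of $\pi$. The geometric content of the statement is exactly this placement of the triangle just outside the critical circle so that the transcendental part isolates.
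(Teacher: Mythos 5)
Your approach is the same as the paper's: apply Lemma~\ref{int}, observe that the affine image of the simplex in the $A$-normalized coordinates $(b_1,b_2)$ is a triangle containing the unit disk, use that $\vol_V(A+b_1D_1+b_2D_2)=N(1-b_1^2-b_2^2)$ on the disk and zero elsewhere, and compute the disk integral in polar coordinates. Your disk-containment check is correct: the triangle has vertices $(-2,-2)$, $(2,-2)$, $(4,0)$, and the distances from the origin to the three supporting lines are $2$, $2\sqrt2$, and $4/\sqrt{10}$, all at least $1$. One small point in your favor: you explicitly flag the (constant) Jacobian of $\Phi$, which works out to $\det\begin{pmatrix}4&6\\0&2\end{pmatrix}=8$, whereas the paper passes from $\md\lambda_1\wedge\md\lambda_2$ to $\md x\wedge\md y$ without inserting the factor $1/8$; this changes the final constant but of course not the conclusion that the volume is a rational multiple of $\pi$.
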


\begin{proof}
We use Lemma \ref{int} to obtain 
\[ \vol_X(\mcO_X(1))= 12 \int_{\substack{\lambda_0+\lambda_1+\lambda_2=1\\\lambda_i \geq 0}} \vol_V(\lambda_0 M_0 + \lambda_1 M_1 + \lambda_2 M_2)\, \md \lambda_1 \wedge \md \lambda_2 \,. \] 
The volume function vanishes outside of the pseudoeffective cone, and it is easy to check that the intersection of the region \[\{\lambda_0 M_0+\lambda_1 M_1+\lambda_2 M_2 \mid \lambda_0+\lambda_1+\lambda_2=1 ,  \lambda_i \geq 0\}\] and the pseudoeffective cone is the disk given by $\{ A +x D_1 +y D_2 \mid x^2+y^2 \leq 1 \}$. This shows that 
\[ \vol_X(\mcO_X(1)) = 12 \int_{x^2+y^2 \leq 1} \vol_V(A +x D_1 +y D_2) \md x \wedge \md y \,.\] 

The asymptotic Riemann--Roch theorem along with the fact that the pseudo-effective and nef cones of $E \times E$ coincide lead to
\begin{align*} \vol_X(\mcO_X(1)) &= 6N \int_{x^2+y^2 \leq 1} 1-x^2-y^2 \,\md x \wedge \md y \\&= 6\pi N  - 6N \int_{x^2+y^2 \leq 1} x^2+y^2 \,\md x \wedge \md y \,. \end{align*}
Using polar coordinates we obtain
\[ \vol_X(\mcO_X(1)) = 6\pi N - 6 N \int_0^{2\pi} \int_0^1 r^3 \, \md r \, \md \varphi \ , \]
yielding \[ \vol_X(\mcO_X(1)) = 3 \pi N \ . \]
\end{proof}

Since the set of volumes is a multiplicative semigroup containing $\BQ_+$ we obtain the following corollary.

\begin{cor}
$\pi$ is an element of $\BV$.
\end{cor}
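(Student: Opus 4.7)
The plan is to derive the corollary directly from the preceding proposition using the multiplicative semigroup structure of $\BV$ together with the containment $\BQ_+ \subseteq \BV$, both of which are stated in the introduction.

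First I would invoke the proposition just established: there exists a smooth projective variety $X$, built as a projective bundle over $E \times E$ for an elliptic curve $E$ without complex multiplication, such that $\vol_X(\mcO_X(1)) = 3\pi N$ for a specific positive integer $N$ (namely the common value $N = A^2 = -D_1^2 = -D_2^2$ appearing in the orthogonal basis of $N^1_\BR(V)$). Since $\mcO_X(1)$ is an integral Cartier divisor on a projective variety, this shows $3\pi N \in \BV$ directly from the definition of $\BV$.

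Next I would apply the two structural properties of $\BV$ recalled in the introduction: $\BV$ contains $\BQ_+$, and $\BV$ is closed under multiplication by the Künneth formula (cited as \cite[Remark 2.42]{Hab}). Since $\frac{1}{3N} \in \BQ_+ \subseteq \BV$ and $3\pi N \in \BV$, the semigroup property yields
\[ \pi \;=\; \tfrac{1}{3N} \cdot 3\pi N \;\in\; \BV, \]
which is the desired conclusion.

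There is no real obstacle here; the entire content of the corollary is a one-line consequence of the proposition and the two formal properties of $\BV$ already established. The only thing to verify carefully is that the proposition truly produces an integral Cartier divisor (so that the element $3\pi N$ lies in $\BV$ rather than merely in its $\BR$-span). This is immediate because the line bundles $\mcO_V(M_0), \mcO_V(M_1), \mcO_V(M_2)$ are built from the integral divisors $A, D_1, D_2$ with integer coefficients, so $\mcE = \bigoplus_i \mcO_V(M_i)$ is a genuine vector bundle and $\mcO_X(1)$ on $X = \BP(\mcE)$ is an integral Cartier divisor in the required sense.
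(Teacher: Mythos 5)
Your proof is correct and is exactly the paper's argument: the one-line sentence preceding the corollary, ``Since the set of volumes is a multiplicative semigroup containing $\BQ_+$ we obtain the following corollary,'' is precisely the step $\pi = \tfrac{1}{3N} \cdot 3\pi N$ that you spell out. Your additional check that $\mcO_X(1)$ is a genuine integral Cartier divisor is a sensible sanity check but is, as you note, immediate.
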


\begin{rem}
Every nonnegative rational number arises as the volume of a divisor on a surface \cite[Proposition 2.46]{Hab}. Using the Künneth formula $\pi$ is then obtained as volume of a divisor on a variety of dimension 6.
\end{rem}

It is natural to ask which is the smallest dimension where a given $\alpha \in \BR^+$ can be realized as a volume. 

\begin{defin}
For any real number $\alpha \in \BV$ we define \[ \nu(\alpha) := \min \left\{ \dim(X) \mid \vol_X(D)=\alpha \text{ for some Cartier divisor $D$ on $X$}  \right\}.\] 
\end{defin}

\begin{rem}
It is known that $\nu(n) = 1$ for $n \in \BN$ and $\nu(q) = 2$ for $q \in \BQ^+ \setminus \BN$. The remark above shows that $\nu(\pi) \leq 6$. It seems to be very interesting, if this bound can be improved to compute $\nu$ for other elements of $\BV$. Also note that a positive answer to Question \ref{CD} would imply that $\nu(\alpha) > d$ for every algebraic $\alpha$ with $\deg(\alpha) > C_d$. 
\end{rem}

\bibliographystyle{amsalpha}
\bibliography{Article}

\end{document}